\numberwithin{equation}{section}
\newtheorem{theo}{Theorem}[section]
\newtheorem{lem}[theo]{Lemma}
\theoremstyle{remark}
\newtheorem{rem}[theo]{Remark}
\renewcommand{\(}{\left(}
\renewcommand{\)}{\right)}
\renewcommand{\d}{\delta}
\newcommand{\e}{\epsilon}
\renewcommand{\l}{\lambda}
\begin{document}
\title[1st Eigenvalue Pinching]{1st Eigenvalue Pinching for Convex Hypersurfaces in a Riemannian Manifold}
\author{Yingxiang Hu, Shicheng Xu}
\address{Yau Mathematical Scieneces Center, Tsinghua University, Beijing, China}
\email{huyingxiang@mail.tsinghua.edu.cn}
\address{Mathematics Department, Capital Normal University, Beijing, China}
\email{shichengxu@gmail.com}
\date{\today}
\subjclass[2010]{53C20, 53C21, 53C24}
\keywords{1st eigenvalue,  mean curvature,  upper curvature bound, quantitative rigidity, convex hypersurfaces}
\begin{abstract}
	Let $M^n$ be a closed convex hypersurface lying in a convex ball $B(p,R)$ of the ambient $(n+1)$-manifold $N^{n+1}$. We prove that, by pinching Heintze-Reilly's inequality via sectional curvature upper bound of $B(p,R)$, 1st eigenvalue and mean curvature of $M$, not only $M$ is Hausdorff close and almost isometric to a geodesic sphere $S(p_0,R_0)$ in $N$, but also its enclosed domain is $C^{1,\alpha}$-close to a geodesic ball of constant curvature.
\end{abstract}
{\maketitle}

\section{Introduction}

As a natural extension of Reilly's result \cite{Reilly1977} in $\mathbb R^{n+1}$, Heintze \cite{Heintze1988} proved that a closed submanfold $M^n$ (including high codimension) lying in a convex ball $B(p,R)$ with $R\le\frac{\pi}{4\sqrt{\d}}$ of $N^{n+m}$, whose sectional curvature $K_N \leq \d$, satisfies
\begin{align}\label{1steigen-ineq}
\l_1(M)&\leq \begin{cases}
n\d+\frac{n}{|M|}\int_{M}|H|^2, &\text{for $\d\geq 0$;}\\
n\d+n\|H\|_\infty^2, &\text{for $\d<0$,}
\end{cases}
\end{align}
where $\l_1(M)$ is the first nonzero eigenvalue of the Laplace-Beltrami operator on $M$, $\|H\|_\infty=\max_M |H|$ and $|M|$ its area. Here $\frac{\pi}{\sqrt{\d}}$ is viewed to be $\infty$ if $\d\le 0$. Equality holds in \eqref{1steigen-ineq} if and only if $M$ is minimally immersed in some geodesic sphere of $N$.

Let $s_\delta$ be the usual $\delta$-sine function defined on $[0,\frac{\pi}{\sqrt{\d}})$ by
\begin{equation*}
\begin{split}
s_\d(r):=\left\{ \begin{aligned}
&\frac{1}{\sqrt{\d}}\sin (\sqrt{\d} r), \quad &\text{if}&~\d>0;\\
&r,                                     \quad &\text{if}&~\d=0;\\
&\frac{1}{\sqrt{-\d}}\sinh(\sqrt{-\d}r),\quad &\text{if}&~\d<0.
\end{aligned}
\right.
\end{split}
\end{equation*} and $s_\d^{-1}$ be its inverse function. Let $\omega_n$ be the volume of $n$-sphere of radius $1$ in $\mathbb R^{n+1}$. As observed in \cite{Hu-Xu2019},
if equality holds in \eqref{1steigen-ineq} for an immersed hypersurface $M^n$ ($n\ge 2$) in $N^{n+1}$, then $M$ encloses a geodesic ball of constant curvature $\d$. We prove in \cite{Hu-Xu2019} that if \eqref{1steigen-ineq} almost holds for $\|H\|_\infty^2$, i.e.,
\begin{align}\label{pinching-condition-1}
n(\d+\|H\|_\infty^2) \leq \l_1(M)(1+\e),
\end{align}
then $M$ is Hausdorff close to a geodesic sphere $S(p_0,R_0)$, and $B(p_0,R_0)$ is almost to be of constant $\d$, provided with $R\le\frac{\pi}{8\sqrt{\d}}$, $|M|\le \omega_ns_\d^n(\frac{\pi}{4\sqrt{\d}})$, and a rescaling invariant control on the mean curvature and area of $M$: 
\begin{equation}\label{rescaling-inv-non-local-col}
|M|^\frac{1}{n}\|H\|_\infty\leq A.
\end{equation} 

According to \cite{Hu-Xu2019}, \eqref{rescaling-inv-non-local-col} can be viewed as a relative non-collapsing condition for hypersurfaces, i.e., 
$$\frac{|B(x_0,r)\cap M|}{|M|}\ge C(n,\d)r^n, \quad \text{for any $x_0\in M$ and $0<r\le \min\left\{\operatorname{diam}_N(M), \frac{\pi}{2\sqrt{\d}}\right\} $},$$
where $\operatorname{diam}_N(M)$ is the extrinsic diameter of $M$ as a subset in $N$.

Therefore, typical examples that do not satisfies \eqref{rescaling-inv-non-local-col} contain the boundary $\partial U_r$ of $r$-neighborhoods of a high co-dimensional submanifold $X$ (not a point) with $r\ll\operatorname{diam}_NX$.

In this paper, we prove that if $\partial U$ is convex, then $\partial U$ still cannot satisfy pinching condition (\ref{pinching-condition-1}). We use $\varkappa(\epsilon\,|\,R,\delta,\cdots)$ to denote a positive function on variables $\epsilon, R,\delta,\cdots$ that converges to $0$ as $\epsilon\to 0$ with other quantities $R,\delta,\cdots$ fixed.

\begin{theo}\label{main-theo-C}
	Let $n$ be an integer $\ge 2$ and let $N^{n+1}$ be a complete Riemannian manifold with $\mu \leq K_N\leq \d$. Let $M^n$ be an embedded convex hypersurface in a convex geodesic ball $B(p,R)$ of $N$. If $\d>0$ we further assume
	\begin{equation}\label{location-size}
	R\le \frac{\pi}{8\sqrt{\d}},\quad |M|\le \omega_n s_\d^n\(\frac{\pi}{4\sqrt{\d}}\).
	\end{equation}
	Then (\ref{pinching-condition-1}) with $\e<\e_0=\e_0(\mu,\d,n, R)$ implies that
	\begin{enumerate}\numberwithin{enumi}{theo}
		\item $M$ is $C\e^\frac{1}{2(2n+1)} s_\d(R_0)$-Hausdorff-close and $C\e^{\frac{1}{2(2n+1)}}$-almost isometric to a geodesic sphere $S(p_0,R_0)$, where $R_0=s_\d^{-1}(\frac{1}{\sqrt{\d+\|H\|_{\infty}^2}})$, and $C=C(n,\d,\mu, R)$ is a positive constant;
		\item $M$ is $\varkappa(\epsilon|R,\d,\mu,n,\alpha)$-$C^{1,\alpha}$ close to a round sphere of constant curvature $1/(s_\d(R_0))^2$ with $0\le\alpha<1$;
		\item The enclosed domain $\Omega$ by $M$ is $\varkappa(\epsilon\,|\, R,\d,\mu,n,\alpha)$ $C^{1,\alpha}$-close to a ball of constant curvature $\delta$.
	\end{enumerate}
Here $\epsilon_0$, $C$, $\varkappa$ above do not depend on $R$ when $\d\ge 0$.
\end{theo}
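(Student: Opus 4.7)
The natural plan is to reduce conclusion (1) to the authors' earlier theorem in \cite{Hu-Xu2019}, which establishes exactly the Hausdorff-closeness and almost-isometry statements of (1) under the additional rescaling-invariant non-collapsing hypothesis \eqref{rescaling-inv-non-local-col}. The new content supplied by convexity is therefore twofold: \textbf{(A)} convexity together with \eqref{pinching-condition-1} and \eqref{location-size} implies \eqref{rescaling-inv-non-local-col} with $A=A(n,\mu,\delta,R)$, after which (1) is immediate; and \textbf{(B)} convexity combined with (1) promotes the Hausdorff closeness to $C^{1,\alpha}$-closeness both for $M$ itself (part (2)) and for the enclosed domain $\Omega$ (part (3)).

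For step \textbf{(A)}, I would chain three classical comparison inequalities. By the Gauss equation and non-negativity of the principal curvatures of a convex hypersurface, the ambient bound $K_N\ge\mu$ passes to the intrinsic bound $K_M\ge\mu$ sectionally, hence $\Ric_M\ge(n-1)\mu$. Since $M\subset B(p,R)$ its extrinsic and hence intrinsic diameters are bounded by $2R$, so Cheng's eigenvalue comparison gives $\l_1(M)\le C_1(n,\mu,R)/\Diam(M)^2$, while Bishop--Gromov volume comparison yields $|M|\le C_2(n,\mu,R)\Diam(M)^n$. The pinching lower bound $\l_1(M)\ge n\|H\|_\infty^2/(1+\epsilon)$ then forces $\|H\|_\infty\cdot\Diam(M)\le C_3(n,\mu,R)$, and combining with the volume estimate yields $|M|^{1/n}\|H\|_\infty\le A(n,\mu,R)$, which is precisely \eqref{rescaling-inv-non-local-col}. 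Conclusion (1) then follows verbatim from \cite{Hu-Xu2019}.

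For step \textbf{(B)}, convexity of $M$ and (1) together permit $M$ to be written locally as the graph of a convex function over a suitable tangent plane of the approximating sphere $S(p_0,R_0)$, and bounded mean curvature endows this graph with a uniformly elliptic quasilinear equation. The $C^0$-smallness furnished by (1) upgrades to $C^{1,\alpha}$-smallness via a Schauder-type compactness argument: any hypothetical violating sequence subconverges to a convex solution of the constant-mean-curvature equation in the limit model space of constant curvature $\delta$, which must be a round sphere by Alexandrov's theorem --- this yields (2). Part (3) additionally requires the induced metric on $\Omega$ to be $C^{1,\alpha}$-close to the constant-curvature-$\delta$ model ball. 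Since equality in Heintze--Reilly forces $\Omega$ to be exactly of constant curvature $\delta$, revisiting that proof in the near-equality regime should convert the pinching into an integral smallness of $K_N-\delta$ on $\Omega$; combined with $\mu\le K_N\le\delta$ and the non-collapsing from (A), Cheeger--Colding-type harmonic-coordinate smoothing then delivers the $C^{1,\alpha}$-closeness of $\Omega$ to the model geodesic ball.

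The principal obstacle is part (3): step (A) is a clean chain of classical comparison estimates once convexity yields $\Ric_M\ge(n-1)\mu$, and (2) is essentially an elliptic-regularity upgrade of $C^0$-closeness in the presence of convexity, but extracting a \emph{quantitative} curvature closeness of $\Omega$ to the constant-curvature-$\delta$ model from the eigenvalue pinching \eqref{pinching-condition-1} on $\partial\Omega=M$ --- and feeding it into Cheeger--Colding regularity theory with an explicit modulus $\varkappa(\epsilon\,|\,\cdots)$ --- is where the bulk of the analytical work lies.
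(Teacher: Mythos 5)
There is a genuine gap, and it sits exactly where the paper's main new work lies. In your step (A) you write that since $M\subset B(p,R)$ ``its extrinsic and hence intrinsic diameters are bounded by $2R$.'' The word ``hence'' is unjustified: an extrinsic diameter bound never trivially controls the intrinsic diameter of a hypersurface, and the claim is even literally false as stated (the round sphere $S^n(R)\subset\mathbb R^{n+1}$ has extrinsic diameter $2R$ but intrinsic diameter $\pi R$). For convex hypersurfaces in Euclidean space an inequality $\operatorname{diam}(M)\le\frac{\pi}{2}\operatorname{diam}(\Omega)$ is a classical result of Makuha, but in a Riemannian ambient space with only $K_N\ge\mu$ it is not at all obvious; this is precisely Lemma \ref{lem-intrinsic-diam}, the main technical lemma of the paper, which is proved by a nontrivial compactness argument in Alexandrov geometry (extremal subsets, the generalized Lieberman lemma, gradient flows of semiconcave functions, and Petrunin's theorem that boundaries converge in the non-collapsed case). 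Your Cheng-plus-Bishop chain for bounding $|M|^{1/n}\|H\|_\infty$ does close once an intrinsic diameter bound $\operatorname{diam}(M)\le C(n,\mu,R)$ is available --- and in that sense it is a reasonable, arguably more direct, alternative to the paper's own route, which instead rescales to $|\partial\Omega_i|=1$, passes to a Gromov--Hausdorff limit, and uses Cheeger--Colding/Fukaya eigenvalue convergence to bound $\lambda_1$ --- but as written you have assumed away the key lemma rather than proved it.

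A secondary misplacement: for parts (2) and (3) the paper does not need graph representations, Schauder estimates, Alexandrov's soap-bubble theorem, or a new quantitative ``$K_N-\delta$ small in $\Omega$'' estimate. Once $|M|^{1/n}\|H\|_\infty\le A$ is established, convexity gives $|B|\le C|H|$, so both hypotheses in \eqref{rescaling-inv-bound} of Theorem \ref{main-theo-B} hold, and all three conclusions of Theorem \ref{main-theo-C} (including the $C^{1,\alpha}$ statements about $M$ and about $\Omega$, via conclusion (2) of Theorem \ref{main-theo-B}) follow from that theorem combined with Cheeger--Gromov convergence. So the ``bulk of the analytical work'' is not in your step (B); it is in the intrinsic diameter bound and the resulting uniform bound on $|M|^{1/n}\|H\|_\infty$, which your proposal treats as immediate.
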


By Theorem \ref{main-theo-C}, one cannot obtain a collapsed metric $g$ via perturbing the interior of a geodesic ball $\Omega$ of constant curvature $\d$ to a new metric $g_\d$ whose sectional curvature's upper bound is close to $\d$.
 
We point it out that the extrinsically relative non-collapsing condition \eqref{rescaling-inv-non-local-col} are required in all earlier known results via pinching \eqref{rescaling-inv-non-local-col}. Here for Theorem \ref{main-theo-C} we also prove that convexity of $M$ plus the pinching condition (\ref{pinching-condition-1}) implies  (\ref{rescaling-inv-non-local-col}) holds for some universal $A$.

At present quantitative rigidity results with respect to the upper curvature bound $\d$ are rarely known. There are relative more rigidity results for convex domains to be of constant curvature via information along their boundary and interior's lower (Ricci) curvature bound (e.g., \cite[Theorems 1, 2]{Schroeder-Strake1989}, \cite[Theorem 1.2]{Miao-Wang2016}, etc.). Compared to Theorem \ref{main-theo-C},  few of their quantitative versions are known.

We refer to \cite{Hu-Xu2019}, \cite{Grosjean-Roth2012},  \cite{Aubry-Grosjean}, \cite{Colbois-Grosjean2007}, and \cite{Hu-Xu2017} for related results in manifolds and space forms.

Let us give the main idea in proving Theorem \ref{main-theo-C}.
If there is a universal constant $A=A(n,\d,\mu)$ such that \eqref{rescaling-inv-non-local-col} holds for a convex hypersurface $M$ in Theorem \ref{main-theo-C}, then it is a direct corollary of the following and Cheeger-Gromov's convergence Theorem ( \cite{Cheegerphd,Cheeger1970}, \cite{GLP1981}, \cite{Kasue1989}, \cite{Green-Wu1988}, \cite{Peters1987}) that conclusions in Theorem \ref{main-theo-C} hold.

\begin{theo}[\cite{Hu-Xu2019}]\label{main-theo-B}
	Let $n$ be an integer $\ge 2$, and let $N^{n+1}$ be a complete Riemannian manifold with $\mu \leq K_N\leq \d$. Let $M^n$ be an immersed hypersurface in a convex geodesic ball $B(p,R)$ of $N$, where $R$ and $|M|$ satisfy \eqref{location-size} when $\d > 0$. If for $q\ge \bar q>n$ and $A>0$, we have
	\begin{equation}\label{rescaling-inv-bound}
	|M|^\frac{1}{n}\|H\|_\infty\leq A, \qquad \text{and} \qquad
	|M|^\frac{1}{n}\|B\|_q\leq A,
	\end{equation}
	where $\|B\|_q=\left(\frac{1}{|M|}\int |B|^q\right)^{1/q}$ is the normalized $L^q$ norm of the 2nd fundamental form $B$ of $M$.
	Then (\ref{pinching-condition-1}) with $\e<\e_1(A,\bar q, R,\d,\mu,n)$ implies that
	\begin{enumerate}\numberwithin{enumi}{theo}
		\item $M$ is $C_1\e^\frac{1}{2(2n+1)} s_\d(R_0)$-Hausdorff-close to a geodesic sphere $S(p_0,R_0)$ in $N$, where $R_0=s_\d^{-1}(\frac{1}{\sqrt{\d+\|H\|_{\infty}^2}})$;
		\item $B(p_0,R_0)$ is $\varkappa(\epsilon)$ $C^{1,\alpha}$-close to a ball of constant curvature $\delta$ for any $0\le \alpha<1$;
		\item $M$ is embedded and $C_2\e^{\min\left\{ \frac{1}{2(2n+1)},\frac{q-n}{2(q-n+qn)}\right\}}$-almost isometric to $S(p_0,R_0)$.
	\end{enumerate}
Here constants $C_1, C_2$ and $\epsilon_1$ depend on $n$, $\d$, $\mu$, $\bar q$, $R$, $A$, and the function $\varkappa(\epsilon)$ also depends on $n$, $\d$, $\mu$,  $\bar q$, $R$, $A$, $\alpha$, where the dependence on $R$ can be dropped when $\d\ge 0$.
\end{theo}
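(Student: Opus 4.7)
The plan is to reduce Theorem \ref{main-theo-C} to Theorem \ref{main-theo-B} by exploiting convexity of $M$ to automatically establish the rescaling-invariant bounds \eqref{rescaling-inv-bound}, and then to upgrade the $C^{1,\alpha}$-regularity of $M$ itself via a $C^{1,1}$-compactness argument powered by convexity.

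\textbf{Step 1: $|M|^{1/n}\|H\|_\infty \le A(n,\d,\mu)$.} Convexity of $M$ gives nonnegative principal curvatures $\kappa_i$, so by the Gauss equation $K_M = K_N + \kappa_i\kappa_j \ge \mu$, whence $\operatorname{Ric}_M \ge (n-1)\mu$. In the regime $\|H\|_\infty^2 \ge 4|\d|$, pinching \eqref{pinching-condition-1} with $\e<1$ forces $\lambda_1(M) \ge \tfrac{n}{4}\|H\|_\infty^2$. Cheng's eigenvalue comparison (using $\operatorname{Ric}_M \ge (n-1)\mu$) gives $\lambda_1(M) \le C_1(n,\mu)/\operatorname{diam}_M(M)^2$ as long as the intrinsic diameter is small, so $\operatorname{diam}_M(M) \le C_2/\|H\|_\infty$. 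Bishop--Gromov volume comparison on $M$ then gives $|M| \le C_3/\|H\|_\infty^n$, whence $|M|^{1/n}\|H\|_\infty \le A$. In the complementary regime of a priori bounded $\|H\|_\infty$, the inequality is immediate from a bound on $|M|$ (which comes from \eqref{location-size} when $\d>0$, and from $\Omega \subset B(p,R)$ with ambient volume comparison when $\d\le 0$).

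\textbf{Step 2: $|M|^{1/n}\|B\|_q \le A$ for any $q$.} Since $\kappa_i\ge 0$,
\begin{equation*}
|B|^2 = \sum_i \kappa_i^2 \le \Big(\sum_i\kappa_i\Big)^{\!2} = H^2,
\end{equation*}
hence $\|B\|_q \le \|H\|_\infty$ and Step 1 gives the desired bound.

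\textbf{Step 3: Apply Theorem \ref{main-theo-B} and upgrade regularity.} With the universal $A$ above and $\bar q = n+1$, Theorem \ref{main-theo-B} applies whenever $\e < \e_0(n,\d,\mu,R)$ and yields both the Hausdorff-closeness and the almost isometry of $M$ to $S(p_0,R_0)$ (giving (1) of Theorem \ref{main-theo-C}), as well as the $C^{1,\alpha}$-closeness of $B(p_0,R_0)$ to a geodesic ball of constant curvature $\d$. The uniform estimate $\|H\|_\infty \le A|M|^{-1/n}$ together with convexity yields $\|B\|_\infty \le C$, so $M$ is a uniformly $C^{1,1}$ embedded hypersurface. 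In the $C^{1,\alpha}$-close ambient metric, a standard Arzela--Ascoli-type compactness argument for embeddings with bounded second fundamental form upgrades Hausdorff-closeness to $C^{1,\alpha}$-closeness of $M$ to a round sphere of curvature $1/s_\d(R_0)^2$, proving (2). Combining the $C^{1,\alpha}$-closeness of $\partial\Omega = M$ with that of the ambient region gives (3).

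\textbf{Main obstacle.} The heart of the argument is Step 1, which converts the hypothesis ``convexity plus eigenvalue pinching'' into the quantitative extrinsic non-collapsing bound needed to apply Theorem \ref{main-theo-B}. The four-link chain \emph{Gauss $\Rightarrow$ Ricci lower bound, pinching $\Rightarrow$ large $\lambda_1$, Cheng $\Rightarrow$ small intrinsic diameter, Bishop--Gromov $\Rightarrow$ small area} must yield constants that are uniform in $(n,\d,\mu)$ and, crucially, independent of $R$ when $\d\ge 0$; this forces one to track carefully which comparison constants are scale-invariant, and to handle cleanly the small-$\|H\|_\infty$ regime (where the argument degenerates but the conclusion is trivial from a direct volume bound on $|M|$). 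Once Step 1 is in place, Step 2 is essentially free from convexity, and Step 3 reduces to invoking Theorem \ref{main-theo-B} plus standard regularity.
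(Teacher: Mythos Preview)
Your proposal is actually a proof of Theorem~\ref{main-theo-C}, not of Theorem~\ref{main-theo-B}; the latter is quoted from \cite{Hu-Xu2019} and is not proved in the present paper. Read as a proof of Theorem~\ref{main-theo-C}, your overall architecture matches the paper's: reduce to Theorem~\ref{main-theo-B} by establishing a universal bound $|M|^{1/n}\|H\|_\infty\le A$, observe that convexity gives $|B|\le |H|$ pointwise so the second bound in \eqref{rescaling-inv-bound} follows for free, and then appeal to Cheeger--Gromov compactness for the $C^{1,\alpha}$ upgrades.

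Where you differ genuinely is in Step~1. The paper obtains the bound on $|M|^{1/n}\|H\|_\infty$ by a contradiction argument that passes through the separate technical Lemma~\ref{lem-intrinsic-diam} (intrinsic diameter of $\partial\Omega$ controlled by extrinsic diameter of $\Omega$, proved via gradient flows of semiconcave functions on Alexandrov spaces and a collapsing/non-collapsing dichotomy), and then invokes Cheeger--Colding eigenvalue convergence on the rescaled boundaries to force $\|H\|_\infty$ bounded. Your route is more direct and substantially more elementary: the pinching hypothesis itself forces $\lambda_1$ large, Cheng's upper eigenvalue comparison (using only $\operatorname{Ric}_M\ge (n-1)\mu$) then forces the intrinsic diameter small, and Bishop--Gromov forces $|M|$ small; no Alexandrov-space machinery and no Cheeger--Colding convergence are needed. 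This is a legitimate simplification. The trade-off is that your argument is tied to the pinching hypothesis, whereas Lemma~\ref{lem-intrinsic-diam} holds for arbitrary convex domains and is of independent interest.

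Two small corrections to Step~1. First, your case split should be governed by $\max(|\delta|,|\mu|)$ rather than $|\delta|$ alone: Cheng's inequality $\lambda_1(M)\le \lambda_1^D(B_\mu^n(d/2))$ only yields a diameter bound once $\lambda_1$ exceeds the infimum $(n-1)^2|\mu|/4$ of the right-hand side (for $\mu<0$), so the ``large $\|H\|_\infty$'' regime must also dominate $|\mu|$. Second, the phrase ``as long as the intrinsic diameter is small'' reads circularly; replace it by the monotone inversion of $r\mapsto\lambda_1^D(B_\mu^n(r))$, which gives $d\le C(n)\,\lambda_1^{-1/2}$ directly once $\lambda_1$ is large. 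With these adjustments your Step~1 is correct, and the dependence of $A$ on $R$ in the small-$\|H\|_\infty$ regime for $\delta\le 0$ is the same as what the paper's argument produces.
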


In order to obtain $A(n,\mu,\d)$ above, we argue by contradiction. Suppose there is a sequence of convex domains $\Omega_i$ such that $|\partial \Omega_i|^{\frac{1}{n}}\|H_{\partial \Omega_i}\|_\infty \to \infty$. We will prove that $\lambda_1(\partial \Omega_i)$ admits a universal upper bound after a rescaling such that the area $|\partial \Omega_i|=1$. On the other hand, by pinching condition \eqref{pinching-condition-1}, $\|H_{\partial \Omega_i}\|_{\infty}$ has a uniform upper bound, a contradiction. 

We will use gradient flows of semi-concave functions on Alexandrov spaces \cite{Burago-Gromov-Perelman1992} to derive a universal ratio upper bound on the intrinsic diameter of $\partial \Omega_i$, and then apply Cheeger-Colding's eigenvalue convergence theorem \cite{Cheeger-Colding2000} on $\partial \Omega_i$ to obtain an upper bound of $\lambda_1$. 

\section{Proof of Theorem \ref{main-theo-C}}
\label{sec:6}

In the following we will apply some results on Alexandrov spaces with curvature bounded below. The references of this section are  \cite{Burago-Gromov-Perelman1992}, \cite{Petrunin2007}.

Roughly speaking, a finite-dimensional {\em Alexandrov space $X$ of curv $\ge \mu$} is a locally compact length metric space on which Toponogov comparison theorem for triangles holds just as that on manifolds of sectional curvature $\ge \mu$. 

A locally Lipschitz function $f:U\subset X\to \mathbb R$ is called {\em $\lambda$-concave}, if $U$ is an open domain and there is a real number $\lambda$ such that for any unit-speed minimal geodesic $\gamma$ in $U$, $f\circ\gamma(t)-\frac{\lambda}{2}t^2$ is concave. 

The {\em gradient flow} $\Phi_t$ of a $\lambda$-concave function is well-defined, and is $e^{\lambda t}$-Lipschitz (see \cite[Lemma 2.1.4(i)]{Petrunin2007}). {\em Extremal subsets} are those subsets which are invariant under gradient flows of all $\lambda$-concave functions \cite{Perelman-Petrunin1993,Petrunin2007}. The boundary of an Alexandrov space of curv $\ge \mu$ is an extremal subset. 

A {\em quasigeodesic} in $X$ is a unit-speed curve $\gamma$ such that for any point $p$ in $X$, the function $f(t)=\int_0^{d(p,\gamma(t))}s_\mu(s)ds$ 
satisfies $f''\le 1-\mu f$ in the barrier sense.
By its definition, limit of quasigeodesics is also a quasigeodesic, see \cite{Perelman-Petrunin1994}. 

By the generalized Lieberman lemma \cite[Theorem 1.1]{Petrunin1997}, any shortest geodesic in an extremal subset is a quasigeodesic in the ambient space. 

We say that a sequence $(X_i,d_i)$ of metric spaces {\em $GH$-converges} to $(X,d)$ in Gromov-Hausdorff topology, if there are $\epsilon_i$-isometries $\psi_i:X_i\to X$ with $\epsilon_i\to 0$, i.e., for any $x,y\in X_i$, $|d_i(x,y)-d(\psi_i(x),\psi_i(y))|\le\epsilon_i$, and $\epsilon_i$-neighborhood of $\psi_i(X_i)$ covers $X$.

The following is the main technical lemma in this paper to derive a bound of intrinsic diameter of $M$, which is required in applying a compactness argument under Gromov-Hausdorff topology. 

\begin{lem}\label{lem-intrinsic-diam}
	Let $\Omega$ be a convex domain of diameter $\le D$ with a smooth boundary $M=\partial \Omega$ in $N^{n+1}$ whose sectional curvature $K_N\ge \mu$. Then the intrinsic diameter of $M$, $$\operatorname{diam}(M)\le C(n,\mu,D)\operatorname{diam}(\Omega),$$ where $C(n,\mu,D)$ is a positive constant depending on $n$, $\mu$ and $D$.
\end{lem}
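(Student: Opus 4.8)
The plan is to argue by contradiction and compactness, exploiting the fact that a convex domain $\Omega$ with $K_N \ge \mu$ is an Alexandrov space of curvature $\ge \mu$ whose boundary is the extremal subset $M = \partial\Omega$. Suppose the lemma fails: then there is a sequence of convex domains $\Omega_i$ with smooth boundaries $M_i$, all of diameter $\le D$ in ambient manifolds $N_i$ with $K_{N_i} \ge \mu$, such that $\operatorname{diam}(M_i)/\operatorname{diam}(\Omega_i) \to \infty$. Rescaling so that $\operatorname{diam}(\Omega_i) = 1$ (which only changes $\mu$ to a larger value, still bounded below by $\mu$ since we can assume $\operatorname{diam}(\Omega_i)$ was already $\le D$, and rescaling up makes the curvature bound weaker — one should be slightly careful and instead keep $\operatorname{diam}(\Omega_i) \le D$ fixed and note the intrinsic diameters blow up). By Gromov's precompactness theorem for Alexandrov spaces with curvature $\ge \mu$ and diameter $\le D$, after passing to a subsequence the $\Omega_i$ converge in Gromov-Hausdorff topology to a compact Alexandrov space $\Omega_\infty$ of curvature $\ge \mu$ and diameter $\le D$, and the boundaries $M_i = \partial\Omega_i$ converge to a closed subset $M_\infty \subset \Omega_\infty$, which is the limit of the extremal subsets $\partial\Omega_i$ and hence (by stability of extremal subsets under GH-convergence, \cite{Petrunin2007}) is itself an extremal subset of $\Omega_\infty$, contained in $\partial\Omega_\infty$.

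The key point is then to bound the intrinsic diameter of $M_\infty$, and more importantly to control how the intrinsic distances on $M_i$ degenerate. Fix two points $x_i, y_i \in M_i$ realizing (approximately) the intrinsic diameter of $M_i$, and converging to $x_\infty, y_\infty \in M_\infty$. Take a shortest path $\gamma_i$ in $M_i$ from $x_i$ to $y_i$. Since $M_i = \partial\Omega_i$ is an extremal subset of the Alexandrov space $\Omega_i$, by the generalized Lieberman lemma \cite[Theorem 1.1]{Petrunin1997} the curve $\gamma_i$ is a quasigeodesic of $\Omega_i$. Quasigeodesics in an Alexandrov space of curvature $\ge \mu$ and diameter $\le D$ have length bounded by a universal constant $C(n,\mu,D)$ — indeed a quasigeodesic satisfies the same second-variation-type inequality as a geodesic in a space of curvature $\ge \mu$, so the distance-from-a-point comparison function $f(t) = \int_0^{d(p,\gamma(t))} s_\mu(s)\,ds$ obeys $f'' \le 1 - \mu f$, which forces $\gamma$ to behave like a geodesic on a model space of curvature $\mu$ and thus its length is controlled once the ambient diameter is $\le D$. (When $\mu \le 0$ one uses that a quasigeodesic cannot have length exceeding that of a minimizing geodesic between its endpoints by more than a factor depending on how much the distance function can oscillate inside a bounded set; when $\mu > 0$ the length is at most $\pi/\sqrt{\mu}$ up to the boundary-diameter scale.) This yields $\operatorname{length}(\gamma_i) = \operatorname{diam}_{M_i}(x_i,y_i) \le C(n,\mu,D)$, hence $\operatorname{diam}(M_i) \le C(n,\mu,D) \cdot \operatorname{diam}(\Omega_i)$ once we reinstate the scaling, contradicting the assumed blow-up.

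The main obstacle — and the step deserving the most care — is the quasigeodesic length bound: one must verify that the differential inequality $f'' \le 1-\mu f$ in the barrier sense genuinely yields a length bound in terms of $\operatorname{diam}(\Omega)$ alone (with the dimensional dependence $n$ entering only through the precompactness/limit step, or perhaps not at all), and in particular handle the case $\mu \le 0$ where the naive comparison does not immediately give a finite bound. Here the convexity of $\Omega$ is essential: it is what makes $\partial\Omega$ extremal, and it is also what prevents $M_i$ from developing long thin tentacles inside a bounded region — a quasigeodesic starting in $M_i$ stays in $M_i$, and inside a convex set of diameter $\le D$ it cannot wander arbitrarily far. A clean way to package this is to note that in the GH-limit the quasigeodesics $\gamma_i$ converge (as curves, by limit-of-quasigeodesics, \cite{Perelman-Petrunin1994}) to a quasigeodesic $\gamma_\infty$ of $\Omega_\infty$ joining $x_\infty$ to $y_\infty$; a quasigeodesic in a fixed compact Alexandrov space has finite length, and by a further contradiction/compactness argument over all such limit spaces (again using Gromov precompactness) this length is uniformly bounded by some $C(n,\mu,D)$, giving the contradiction and completing the proof.
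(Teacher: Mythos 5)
Your overall strategy (contradiction, Alexandrov structure of $\overline\Omega$, extremality of $\partial\Omega$, Gromov--Hausdorff compactness) starts along the same lines as the paper, but the step that carries all the weight is false. You claim that a quasigeodesic in an Alexandrov space of curvature $\ge\mu$ and diameter $\le D$ has length bounded by a constant $C(n,\mu,D)$, arguing from the comparison inequality $f''\le 1-\mu f$. No such bound exists: every geodesic is a quasigeodesic, quasigeodesics extend for all time, and already on a round sphere (or a flat torus) of diameter $\le D$ there are (quasi)geodesics of arbitrarily large, indeed infinite, length. The inequality $f''\le 1-\mu f$ constrains the convexity profile of distance functions along the curve, not its length. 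The curves $\gamma_i$ you consider are minimizing only inside $M_i$, not in $\Omega_i$, so the generalized Lieberman lemma gives you a quasigeodesic with no a priori length control --- and bounding exactly this length is the content of the lemma, so at this point the argument is circular. The closing ``clean packaging'' does not repair it: lengths are only lower semicontinuous under the relevant convergence (and in a collapsing sequence quasigeodesics need not converge to quasigeodesics of the limit), so finiteness of the length of a limit curve, even if it held, would not bound $L(\gamma_i)$; moreover a quasigeodesic in a compact Alexandrov space can perfectly well have infinite length, so the assertion you rely on there is also false. You also never confront the collapsing case, which is where the real difficulty lies (think of thin convex ``pancakes'').

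For comparison, the paper's proof never bounds lengths of quasigeodesics. It splits the contradiction argument according to whether the limit $X$ of $\overline\Omega_i$ has full dimension. In the non-collapsed case it invokes Petrunin's theorem \cite[Theorem 1.2]{Petrunin1997} that the boundaries with their intrinsic metrics converge to $\partial X$, and the Gauss equation (using convexity) to see each $M_i$, hence $\partial X$, is an Alexandrov space of curvature $\ge\mu$; compactness of $\partial X$ then contradicts $\operatorname{diam}(M_i)\to\infty$. In the collapsed case it rescales so $\operatorname{diam}(\Omega_i)\to 1$ and uses gradient flows of distance functions (Lipschitz with controlled constant, \cite[Lemma 2.1.4]{Petrunin2007}) to push ambient minimal geodesics between the diameter-realizing points into $M_i$, producing explicit curves in $M_i$ of uniformly bounded length --- here convexity enters through the flow-invariance of the boundary, not through a quasigeodesic length estimate. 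If you want to salvage your approach, you would need to replace the false quasigeodesic length bound by a construction of this kind.
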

\begin{proof}
	Let us argue by contradiction. Assume the contrary, then there is a sequence $\Omega_i$ of convex domains of diameter $\le d$, where the ratio $\operatorname{diam}(M_i)/\operatorname{diam}(\Omega_i)\to \infty$.
	
	Note that the closure $\overline\Omega_i$ of $\Omega_i$ is an Alexandrov space with curv $\ge \mu$ of Hausdorff dimension $n+1$, and its boundary $M_i=\partial \Omega_i$ is an extremal subset of $\Omega_i$. By passing to a subsequence, we may assume that $\overline\Omega_i$ $GH$-converges to an Alexandrov space $X$ with curv $\ge \mu$.
	
	Case 1. The Hausdorff dimension of $X$ equals $n+1$. That is, $\Omega_i$ is a non-collapsing sequence. Then Lemma \ref{lem-intrinsic-diam} is a direct corollary of \cite[Theorem 1.2]{Petrunin1997}, whose special case is that the boundary $\partial \Omega_i=M_i$ with the intrinsic metric $GH$-converges to $\partial X$, the boundary of $X$.
	
	Indeed, by the $GH$-convergence, $d\ge \lim_{i\to \infty}\operatorname{diam}(\Omega_i)=\operatorname{diam}(X)>0$. And thus the intrinsic diameter $\operatorname{diam}(\partial X)=\lim_{i\to \infty}\operatorname{diam}(M_i)=\infty.$
	On the other hand, by Gauss equation, $\partial \Omega_i$ is an Alexandrov space with curv $\ge \mu$. Hence $\partial X$ is also an Alexandrov space with the same lower curvature bound. By the compactness of $\partial X$, $\operatorname{diam}(\partial X)<\infty$, a contradiction.
	
	Case 2. The Hausdorff dimension of $X$ $<n+1$. By passing to a rescaled converging subsequence, we assume that $\operatorname{diam}(\Omega_i)\to 1$.
	
	Let $p_i,q_i$ be two points in $M_i$ such that the intrinsic distance $d_{M_i}(p_i,q_i)=\operatorname{diam}(M_i)$, and let $\gamma_i=[p_iq_i]$ be a minimal geodesic in $\Omega_i$
	connecting $p_i=\gamma_i(0)$ and $q_i$ and realizing their distance $d(p_i,q_i)$ in $\Omega_i$. By Ascoli theorem and passing to a subsequence, we may assume $\gamma_i$ converges to a minimal geodesic $\gamma$ in $X$, which connects $p$ and $q$. Then $p_i\to p$, $q_i\to q$, and the length of geodesics, $L(\gamma_i)\to L(\gamma)=d(p,q)$.
	
	(i) $0<d(p,q)\le 1$. Let us consider the distance function $\operatorname{dist}_{p_i}$ to point $p_i$. Then it converges to $\operatorname{dist}_p$. Let $\alpha_i$ be a gradient curve of $\operatorname{dist}_{p_i}$ such that $\alpha_i(0)\in M_i$ and $\alpha_i(0)$ converges to $\gamma(\epsilon)\neq p$ as $\overline{\Omega}_i$ $GH$-converges to $X$. Since $\Omega_i$ is collapsing, such $\alpha_i$ always exists for any $\epsilon>0$ and $\alpha_i$ lies in $M_i$.
	
	Since $\gamma$ is the gradient curve of $\operatorname{dist}_p$, by \cite[Lemma 2.1.5]{Petrunin2007}, the gradient curve $\alpha_i$ of $\operatorname{dist}_{p_i}$ and $L(\alpha_i)$ converges to $\gamma|_{[\epsilon,L(\gamma)]}$ and $L(\gamma)$ respectively. Therefore, the gradient curve $\alpha_i$ has length $\le L(\gamma)+\epsilon$ for all $i$ large.
	
	Let $\hat p_i$ and $\hat q_i$ be the endpoints of $\alpha_i$, then for $\epsilon\to 0$ we have $\hat p_i=\alpha_i(0)\to p$ and $\hat q_i\to q$.
	Now by the convexity of $\Omega_i$, we are able to use the gradient flow $\Phi_\epsilon$ of $\operatorname{dist}_{p_i}$ (resp. $\operatorname{dist}_{q_i}$) by a definite time $\le 2\epsilon$ to push the minimal geodesic $[\hat q_iq_i]$ (resp. $[\hat p_ip_i]$) to a curve  $\beta_{q_i}$ (resp. $\beta_{p_i}$) in $M_i$, whose length $\le e^{C(\mu)\cdot\frac{\epsilon}{d(p,q)} }\epsilon$. Since the joint curve by $\alpha_i$, $\Phi_t(p_i)$,  $\Phi_t(\hat p_i)$, $\Phi_t(q_i)$, $\Phi_t(\hat q_i)$, $\beta_{p_i}$ and $\beta_{q_i}$ gives rise to a curve of uniformly bounded length connecting $p_i$ and $q_i$ in $M_i$, a contradiction to $d_{M_i}(p_i,q_i)\to \infty$.

	(ii) $d(p,q)=0$, i.e., $p=q$. Since $\operatorname{diam}(X)=1$, there is a point $z\in X$ such that $d(z,p)=\frac{1}{2}$. Let $z_i\in \Omega_i$ be a sequence of points converging to $z$. Then for all $i$ large, the gradient flow $\Phi_2$ of $\operatorname{dist}_{z_i}$ pushes the minimal geodesic $[p_iq_i]$ in $\Omega_i$ to a curve $\alpha$ in $M_i$ with length $e^{C(\mu)}d(p_i,q_i)$. Then the gradient curves $\Phi_t(p_i)$ ($t\in [0,1]$), $\alpha$ and $\Phi_t(q_i)$ ($t\in [0,1]$) form a curve connecting $p_i$ and $q_i$ in $M_i$, whose length is bounded by $2+e^{C(\mu)}d(p_i,q_i)\to 2$. This contradicts to that $d_{M_i}(p_i,q_i)\to \infty$.
\end{proof}

\begin{rem}
	Note that for closed convex hypersurfaces in $\mathbb R^n$, $\operatorname{diam}(M)\le \frac{\pi}{2}\operatorname{diam}(\Omega)$, see \cite{Makuha1966,Makuha1967}, cf. \cite[Page 43]{Burago-Zalgaller1988}. It is interesting if one can give a sharp estimate on the constant in Lemma \ref{lem-intrinsic-diam}.
\end{rem}

\vspace{2mm}
\begin{proof}[Proof of Theorem \ref{main-theo-C}]
	~
	
	Since $M$ is the boundary of a convex domain $\Omega$, $|B|$ is bounded by $|H|$. By Theorem \ref{main-theo-B}, it suffices to show that, via pinching (\ref{pinching-condition-1}), $|M|^\frac{1}{n}\|H\|_\infty$ has a uniform upper bound.
	
	Let us argue by contradiction. Let $\Omega_i$ be a sequence of convex domains in $N_i$ such that $|\partial \Omega_i|^\frac{1}{n}\|H_{\partial \Omega_i}\|_\infty \to \infty$. 
	
	By Gauss equaton, the convexity of $\Omega_i$ implies that $\partial \Omega_i$ is also a Riemannian manifold of sectional curvature $\ge \mu$. By Lemma \ref{lem-intrinsic-diam}, we have
	$$\operatorname{diam}(\partial \Omega_i)\le C(n,\mu, R) \operatorname{diam}(\Omega_i).$$
	By Bishop's volume comparison theorem, the volume $|\partial \Omega_i|$ is also uniformly bounded from above.
	
	By the discussion above, we may rescale $\Omega_i$ such that $|\partial \Omega_i|= 1$ and the rescaled sectional curvature of $\Omega_i$ still admits a uniform lower bound.  By passing to a subsequence, $\partial \Omega_i$ $GH$-converges to $Y$. According to Cheeger-Colding \cite{Cheeger-Colding2000}, the 1st-eigenvalues $\lambda_1(\partial \Omega_i)$ converge to $\lambda_1(Y)$, the 1st-eigenvalue of $Y$, which by \cite{Fukaya1987} is well-defined. Then the pinching condition
	$$n(\delta+\|H_{\partial \Omega_i}\|_\infty^2)\le \lambda_1(\partial \Omega_i)(1+\epsilon)\to \lambda_1(Y)(1+\epsilon)$$
	implies that $\|H_{\partial \Omega_i}\|_\infty$ is bounded uniformly. This contradicts to the choice of $\Omega_i$.
	
\end{proof}

{\bf Acknowledgements}. The first author was supported by China Postdoctoral Science Foundation (No.2018M641317). The second author was supported partially by National Natural Science Foundation of China [11871349], [11821101], by research funds of Beijing Municipal Education Commission and Youth Innovative Research Team of Capital Normal University.

\end{document}